\theoremstyle{plain}
\newtheorem{thm}{Theorem}
\newtheorem{lem}{Lemma}
\theoremstyle{definition}
\theoremstyle{remark}
\numberwithin{equation}{section}
\newcommand\rI{\mathrm{I}}
\newcommand\C{\mathbb{C}}
\newcommand\R{\mathbb{R}}
\newcommand\vc{v_c}
\def\tht{\theta}
\def\Om{\Omega}
\def\e{\varepsilon}
\def\G{\Gamma}
\def\l{\lambda}
\def\p{\partial}
\def\b{\beta}
\def\Ups{\Upsilon}
\def\k{\kappa}
\def\d{\delta}
\def\L{\Lambda}
\def\iu{\mathrm{i}}
\def\Op{\mathcal{H}}
\def\rA{\mathrm{A}}
\def\rQ{\mathrm{Q}}
\def\rE{\mathrm{E}}
\def\cR{\mathcal{R}}
\def\cP{\mathcal{P}}
\def\cB{\mathcal{B}}
\def\cL{\mathcal{L}}
\def\cS{\mathcal{S}}
\def\cI{\mathcal{I}}
\def\cC{\mathcal{C}}
\def\rE{\mathrm{E}}
\def\rM{\mathrm{M}}
\def\rT{\mathrm{T}}
\def\rP{\mathrm{P}}
\def\rU{\mathrm{U}}
\def\rV{\mathrm{V}}
\def\rA{\mathrm{A}}
\def\rB{\mathrm{B}}
\def\rD{\mathrm{D}}
\DeclareMathOperator{\RE}{Re}
\DeclareMathOperator{\IM}{Im}
\DeclareMathOperator{\Dom}{\mathfrak{D}}
\DeclareMathOperator{\rank}{rank}
\DeclareMathOperator{\Ker}{Ker}
\renewcommand{\geq}{\geqslant}
\begin{document}

\title[Eigenvalue and resolvent asymptotics for a graph with a
shrinking edge]{Exotic eigenvalues and analytic resolvent for a graph with a shrinking edge}

\author{Gregory Berkolaiko}
\address{Department of Mathematics, Texas A\&M University, College Station, TX 77843-3368, USA}

\author{Denis I. Borisov}
\address{Institute of Mathematics, Ufa Federal Research Center,
  Russian Academy of Sciences,  Chernyshevsky str. 112, Ufa, Russia,
  450008 \quad and \quad
University of Hradec Kr\'alov\'e, Rokitansk\'eho 62, Hradec
  Kr\'alov\'e 50003, Czech Republic}

\author{Marshall King}
\address{Department of Mathematics, Texas A\&M University, College Station, TX 77843-3368, USA}


\keywords{metric graphs, small edge, resolvent, asymptotic expansion,
  analyticity, eigenvalue estimates}
\subjclass{
34B45,
34L15,
47A10,
81Q10,
81Q35}


\begin{abstract}
  We consider a metric graph consisting of two edges, one of which has
  length $\e$ which we send to zero.  On this graph we study the
  resolvent and spectrum of the Laplacian subject to a general vertex
  condition at the connecting vertex.  Despite the singular nature of
  the perturbation (by a short edge), we find that the resolvent
  depends analytically on the parameter $\e$.  In contrast, the
  negative eigenvalues escape to minus infinity at rates that could be
  fractional, namely, $\e^0$, $\e^{-2/3}$ or $\e^{-1}$.  These rates
  take place when the corresponding eigenfunction localizes,
  respectively, only on the long edge, on both edges, or only on the
  short edge.
\end{abstract}

\maketitle

\section{Introduction}

Differential operators on metric graphs arise in numerous applied
problems, for example, as effective descriptions of physical processes
taking place on thin branching domains
\cite{KucZen_incol03,Gri_incol08,ExnPos_jpa09,Post_book12}.  Spectral
properties of such operators depend on many factors, such as the
differential expression itself, vertex matching conditions,
connectivity (topology) of the graph, as well as edge lengths.  In this
study we focus on a graph which consists of edges of two length
scales, of order one and of order $\e \to 0$.  Such problems
arise naturally in the studies of metamaterials, where a large-scale
structure may contain small-scale inclusions which substantially
alters the overall physical properties \cite{CheExnTur_ap10,
  DoKucOng_ems17,LawTanChr_sr22}.

While analytic dependence of a compact graph's spectrum on the edge
lengths was known for some time \cite{BerKuc_incol12}, this result
specifically excluded the case of edges shrinking to a point.
Substantial progress was achieved in four recent publications
\cite{BerLatSuk_am19,Cac_s19,Bor_am22,Bor_m21}, where general positive results were
established under varying ``non-resonance'' conditions, which,
informally speaking, prevent eigenfunctions from localizing on the
shrinking part of the graph.  In this work we thoroughly study a simple example
that violates these conditions.

Despite the simplicity of the example, we catalogue a variety of
curious behaviors.  To give a preview, consider the operator acting as
$-\frac{d^2}{dx^2}$ on the space $L^2((-\e, 0))\oplus
L^2((0,1))$, supplied with the following vertex conditions
\begin{equation}
  \label{eq:preview_example}
  u'(-\e) = 0,
  \qquad
  u'(0-) = u(0+),
  \qquad
  u'(0+)=-u(0-),
  \qquad
  u(1)=0.
\end{equation}
An a priori bound by Kuchment \cite{Kuc_wrm04} (see also
\cite{KosSch_incol06,BolEnd_ahp09}) estimates the bottom of the
spectrum to be $\gtrsim -1/{s}_{\min}$, where ${s}_{\min}$ is the
shortest edge length, i.e. $\e$.  Surprisingly, in this
particular example, the lowest eigenvalue tends to $-\infty$ at a
fractional rate, namely $\lambda_1 = - \e^{-2/3} + O(\e^{2/3})$.

It is interesting to compare this example to the problem of absorption of
eigenvalues into the continuous spectrum, studied by Simon in \cite{Sim_jfa77}.
Rescaling all edge lengths by $\e$ and extending the long edge to
infinity, we arrive at the eigenvalue problem for the
Laplacian on $(-1,0) \cup (0,\infty)$ with vertex conditions
\begin{equation}
  \label{eq:preview_example_inf}
  u'(-1) = 0,
  \qquad
  u'(0-) = \e u(0+),
  \qquad
  u'(0+)=-\e u(0-).
\end{equation}
Here, the lone negative eigenvalue approaches the continuous spectrum
at $[0,\infty)$ at the rate $\e^{4/3}$.  In \cite[end of
Sec.~2]{Sim_jfa77}, Simon argued that one can obtain any rate
$\e^\alpha$, $\alpha \geq 1$, by considering a fractional power of
Laplacian.  Here we obtain a fractional rate for the Laplacian itself
and only with linear dependence of the vertex conditions on the
parameter $\e$.

In this work we take this two-edge
graph and search through all possible vertex conditions at the
connecting vertex, in order to classify all possible rates attainable
by the negative eigenvalues and, through a detailed study of the resolvent
and the eigenfunctions, understand the circumstances in which the
fractional rates arise.  Interestingly, we find that the leading order
rates can be $\e^0$, $\e^{-2/3}$ or $\e^{-1}$ and \emph{nothing else}.
This encourages us to conjecture that the same holds for arbitrary
graphs with edge lengths of two scales.

\section{Problem setting and the main results}
\label{sec:results}

We consider the graph $\Gamma_\varepsilon$ consisting of an edge
${s}_\e$ of length $\e$ connected to an edge $e$ of length $1$, see
Figure \ref{fig:example}; here $\e$ is a small positive parameter.
The internal vertex connecting the two edges is denoted by $\vc$,
while the other two vertices being the end-points of the edges ${s}_\e$
and $e$ are respectively denoted by $v_\e$ and $v$. On the edges we
introduce variables, which are respectively denoted by $x_\e$ and
$x$. The orientation is fixed by letting $x_\e$ range from 0 at
$v_\e$ to $\e$ at $\vc$ and $x$ range from 0 at $v_1$ to $1$ at $\vc$.

\begin{figure}[t]
  \centering
  \includegraphics[width=8cm]{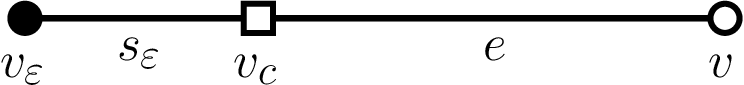}
  \caption{The graph $\Gamma_\varepsilon$. The vertex conditions are Neumann at $v_\e$, Dirichlet at $v$, and arbitrary at $v_c$.}
  \label{fig:example}
\end{figure}

We consider the self-adjoint operator $\Op_\e$ on
$L_2(\G_\e):=L_2({s}_\e)\oplus L_2(e)$, acting as
\begin{equation}\label{2.1}
  \Op_\e \colon
  (u_\e, u) \mapsto
  \left(-\frac{d^2u_\e}{dx_\e^2}, -\frac{d^2u}{dx^2}\right)
\end{equation}
with the domain $\Dom(\Op_\e)$  consisting of the functions $(u_\e,u)\in H^2({s}_\e)\oplus H^2(e)$ satisfying the boundary conditions
\begin{equation}\label{2.2}
u_\e'(0)=0,\qquad u(0)=0, \qquad \rP \rU=0,\qquad \rQ \rU'=\rT \rQ\rU,
\end{equation}
where
\begin{equation}\label{2.3}
\rU:= \begin{pmatrix}
u_\e(\e)
\\
u(1)
\end{pmatrix},\qquad \rU':= \begin{pmatrix}
-u_\e'(\e)
\\
-u'(1)
\end{pmatrix},
\end{equation}
$\rP$ is an arbitrary orthogonal projection operator acting in $\C^2$,
and $\rT$ is an arbitrary self-adjoint operator acting on the range of
$Q:=\rI-\rP$.  By Theorem 1.4.4 of \cite{BerKuc_graphs}, the last two
conditions in (\ref{2.2}) represent arbitrary\footnote{We also
  considered other descriptions of the vertex conditions, such as
  those listed in \cite[Thm.~1.4.4]{BerKuc_graphs} as well as the
  parametrization introduced in \cite{ExnGro_prep99}.  The
  parametrization we use in \eqref{2.2} results in the least
  cumbersome classification of the asymptotic behaviors,
  Table~\ref{tab:neg_eig}.} self-adjoint conditions at the vertex
$\vc$ and the introduced operator is self-adjoint.

Since the operator $\Op_\e$ is defined on a compact graph, its
resolvent is a compact operator in $L_2(\G_\e)$ and its spectrum
consists of discrete eigenvalues, which can accumulate at infinity
only.  The main aim of this work is to study the behavior of the
resolvent and eigenvalues of the operator $\Op_\e$ as $\e$
goes to zero.  Specifically, we focus on the negative eigenvalues ---
and the corresponding eigenfunctions --- as their behavior is most
strongly affected by the ``singular'' limit $\e\to0$.

We further parametrize vertex conditions~\eqref{2.2}
by considering the following three cases.
\begin{enumerate}
\item
If $\rank\rP=2$, then
\begin{equation}\label{2.4}
  \rP=\mathrm{I},\qquad \rQ=0,\qquad\rT=0.
\end{equation}
This case corresponds to the (decoupled) Dirichlet conditions at the central vertex.
\item
If $\rank\rP=1$, then the matrices
$\rP$ and $\rQ$ can be represented as
\begin{equation}\label{eq:parametrization}
\begin{aligned}
&\rP = \begin{pmatrix}
\frac{1}{\sqrt{1+|z|^2}}
\\
\frac{\overline{z}}{\sqrt{1+|z|^2}}
\end{pmatrix}
\begin{pmatrix}
\frac{1}{\sqrt{1+|z|^2}}
 &
\frac{z}{\sqrt{1+|z|^2}}
\end{pmatrix}=\begin{pmatrix}
                \frac{1}{1+|z|^2} & \frac{z}{1+|z|^2} \\
                \frac{\bar z}{1+|z|^2} & \frac{|z|^2}{1+|z|^2}
              \end{pmatrix},
              \qquad z \in \C \cup \{\infty\},
\\
&\rQ = \begin{pmatrix}
  \frac{|z|^2}{1+|z|^2} & -\frac{z}{1+|z|^2} \\
  -\frac{\bar z}{1+|z|^2} & \frac{1}{1+|z|^2}
\end{pmatrix},
\end{aligned}
\end{equation}
and $T$ acts as a multiplication by $\mu\in\R$.  This case includes
Neumann-Kirchhoff ($z=-1$, $\mu=0$) and delta-type conditions ($z=-1$,
$\mu\neq0$).  When $z=0$ or $z=\infty$, the central vertex decouples
into one Dirichlet and one Neumann condition.
\item If $\rank\rP=0$, then
\begin{equation}\label{2.5}
\rP=0,\qquad \rQ=\mathrm{I}, \qquad \rT=  \begin{pmatrix}
          a & c \\
            \bar c & b
        \end{pmatrix}
\end{equation}
with some $a,b\in\R$, $c\in \C$. This case corresponds to a
generalized Robin condition $\rU'=\rT \rU$, which is decoupled if
$\rT$ is diagonal.
\end{enumerate}
Our main results are as follows.

\begin{thm}
  \label{tm:asym}
  The operator $\Op_\e$ can have at most two negative eigenvalues.
  Each eigenvalue is simple and comes in one of the three possible types
  (in the description below, $\L_0$ is a function holomorphic at 0 and
  satisfying $\L_0(0)\neq 0$, $\L_1$ is a smooth function satisfying
  the estimate $\L_1(s) = O\big(e^{-\frac{C}{s}}\big)$ for small
  positive $s$ and a positive constant $C$, and $\alpha>0$ is a real constant):
  \begin{enumerate}
  \item[(B)] a bounded eigenvalue,
    \begin{equation}
      \label{eq:eigB}
      \lambda(\e) = -\L_0^2(\e) = -\alpha + O(\e),
    \end{equation}
  \item[(S)] an eigenvalue depending on the square root $\e^{\frac{1}{2}}$,
    \begin{equation}
      \label{2.20}
      \l(\e)=-\e^{-1}
      \left(\L_0(\e^\frac{1}{2})+\L_1(\e^\frac{1}{2})\right)^2
      = -\alpha \e^{-1} + O(\e^{-\frac{1}{2}}),
    \end{equation}
  \item[(C)] an eigenvalue depending on the cubic root $\e^{\frac{1}{3}}$,
    \begin{equation}
      \label{2.21}
      \l(\e)=-\e^{-\frac{2}{3}}\left(\L_0(\e^\frac{1}{3})+\L_1(\e^\frac{1}{3})\right)^2
      = -\alpha \e^{-\frac{2}{3}} + O(\e^{-\frac{1}{3}}).
    \end{equation}
  \end{enumerate}
  The negative eigenvalues exist only in the cases specified in
  Table~\ref{tab:neg_eig}, where $\kappa_1$ is the root of
  \begin{equation}
    \label{eq:k1_def}
    \kappa \coth\kappa = -\mu(1+|z|^2),
  \end{equation}
  and $\kappa_0$ is the root of
  \begin{equation}
    \label{eq:k0_def}
    \kappa \coth\kappa = \frac{|c|^2 - ab}{a}, \quad a\neq 0,
    \qquad\text{or}\qquad
    \kappa \coth\kappa = -b, \quad a=0,\quad c=0.
  \end{equation}
\end{thm}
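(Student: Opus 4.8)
The plan is to separate variables, reduce the eigenvalue problem to a scalar transcendental equation, and then study its positive roots. Fix a candidate negative eigenvalue $\l=-k^2$ with $k>0$. On each edge an eigenfunction of $\Op_\e$ solves $-u''=-k^2u$, and the Neumann condition at $v_\e$ and the Dirichlet condition at $v$ in \eqref{2.2} force $u_\e(x_\e)=A\cosh(kx_\e)$ and $u(x)=B\sinh(kx)$ with $(A,B)\in\C^2$; thus $\rU$ and $\rU'$ in \eqref{2.3} become explicit functions of $A$, $B$ and $k$. Substituting into the two remaining vertex conditions $\rP\rU=0$, $\rQ\rU'=\rT\rQ\rU$ produces a homogeneous $2\times2$ system for $(A,B)$ whose vanishing determinant is a scalar equation $F(k,\e)=0$. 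In the three cases \eqref{2.4}--\eqref{2.5} one computes, respectively: the system forces $A=B=0$ (the decoupled Dirichlet case, with no negative eigenvalue); a single equation $k\coth k+|z|^2k\tanh(k\e)=-\mu(1+|z|^2)$ when $z$ is finite, degenerating to $k\tanh(k\e)=-\mu$ when $z=\infty$; and $\big(k\tanh(k\e)+a\big)\big(k\coth k+b\big)=|c|^2$ when $\rank\rP=0$.

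Since $k\mapsto k\coth k$ increases strictly from $1$ to $\infty$ on $(0,\infty)$ and $k\mapsto k\tanh(k\e)$ increases strictly from $0$ to $\infty$, each characteristic function is a sum or a product of monotone functions, hence has at most two positive roots; this gives at most two negative eigenvalues. For simplicity, an eigenfunction is proportional to $\cosh$ on $s_\e$ and to $\sinh$ on $e$, so a double eigenvalue would need the $2\times2$ system to have rank $0$; tracing the parametrizations, this would force $\rP=0$ together with $\rT=\mathrm{diag}\big(-k\tanh(k\e),-k\coth k\big)$, impossible for all small $\e$ since the two diagonal entries have roots at incompatible scales in $\e$. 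For roots with $k=O(1)$, use $\tanh(k\e)=k\e+O(\e^3)$ uniformly on compacts: $F(k,\e)$ is then entire in $(k,\e)$, and $F(k,0)=0$ reduces $k\coth k$ to the constant in \eqref{eq:k1_def} or \eqref{eq:k0_def}; by monotonicity such a root $\kappa_1$ or $\kappa_0$ exists exactly when that constant exceeds $1$ and is then unique. As $\p_kF(\kappa,0)\neq0$, the analytic implicit function theorem yields a unique holomorphic root $k(\e)=\kappa+O(\e)$, so $\l(\e)=-k(\e)^2=-\L_0^2(\e)$ with $\L_0(0)=\kappa\neq0$: this is the bounded type (B) with rate $\e^0$ in \eqref{eq:eigB}.

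A root with $k\to\infty$ can occur only where the $k\coth k$ term is absent or balanced by a subleading term, which, inspecting the characteristic equations, happens exactly for $\rank\rP=1$ with $z=\infty$ (and $\mu<0$), and for $\rank\rP=0$ with either $a<0$ or $a=0$, $c\neq0$. The dominant balance is $k^2\e\sim\mathrm{const}$ in the first two situations and $k^3\e\sim\mathrm{const}$ in the last, suggesting the substitutions $k=\tilde k\,\e^{-1/2}$, $s=\e^{1/2}$, respectively $k=\tilde k\,\e^{-1/3}$, $s=\e^{1/3}$. Under these, $k\tanh(k\e)$ becomes analytic in $(\tilde k,s)$, while $k\coth k=\tfrac{\tilde k}{s}\coth(\tilde k/s)=\tfrac{\tilde k}{s}\big(1+O(e^{-C/s})\big)$, so after clearing denominators the equation takes the form $F_0(\tilde k,s)+R(\tilde k,s)=0$ with $F_0$ analytic and $R$ smooth with $R=O(e^{-C/s})$. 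At $s=0$ it reduces to $\tilde k^2+a=0$ (type (S), $\a=-a$, resp. $\a=-\mu$) or $\tilde k^3=|c|^2$ (type (C), $\a=|c|^{4/3}$), with a unique positive root $\tilde k_0$ and $\p_{\tilde k}F_0(\tilde k_0,0)\neq0$. The smooth implicit function theorem gives a root $\tilde k(s)$, and comparing with the analytic root of $F_0$ alone shows $\tilde k(s)=\L_0(s)+\L_1(s)$ with $\L_0$ holomorphic, $\L_0(0)=\sqrt\a\neq0$, and $\L_1(s)=O(e^{-C/s})$. Undoing the substitution gives $\l=-\e^{-1}\tilde k(s)^2$, which is \eqref{2.20}, or $\l=-\e^{-2/3}\tilde k(s)^2$, which is \eqref{2.21}. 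Finally, running through the parametrizations and recording when $\kappa_0$, $\kappa_1$ and the escaping roots exist assembles Table~\ref{tab:neg_eig}.

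The main obstacle is the case analysis for $\rank\rP=0$: there the characteristic function $\big(k\tanh(k\e)+a\big)\big(k\coth k+b\big)-|c|^2$ is a product whose two factors may each change sign, so that determining, as $(a,b,c)$ ranges over $\R\times\R\times\C$, how many positive roots exist and which remain bounded versus escape to infinity requires delicate sign tracking. Secondarily, for the escaping roots one must verify carefully that after the fractional rescaling the characteristic function really does split into an analytic part plus an exponentially small (flat) remainder in $s$, so that the implicit function theorem produces the decomposition $\L_0+\L_1$ with the stated bound on $\L_1$.
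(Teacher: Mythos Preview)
Your outline follows essentially the same route as the paper: derive the scalar characteristic equation for each value of $\rank\rP$, apply the analytic implicit function theorem for the bounded roots, and handle the escaping roots via the fractional substitutions $k=\tilde k\,\e^{-1/2}$ or $k=\tilde k\,\e^{-1/3}$ together with the splitting $\coth t=1+O(e^{-2t})$ into an analytic piece plus an exponentially small remainder. The paper's treatment of the remainder differs only in presentation---it uses an explicit intermediate-value argument to localize the full root near the analytic one, rather than invoking the smooth implicit function theorem---but the content is the same, and your decomposition $\tilde k(s)=\L_0(s)+\L_1(s)$ is exactly what emerges.

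The one substantive difference is how the negative eigenvalues are counted in the case $\rank\rP=0$. You propose to count roots directly from the monotonicity of the two factors $k\tanh(k\e)+a$ and $k\coth k+b$, and you correctly flag the resulting sign analysis as the main obstacle. The paper sidesteps this entirely: it proves a separate lemma (Lemma~\ref{lem:number_eig_rank2}) via the Behrndt--Luger formula, which expresses the number of negative eigenvalues of $\Op_\e$ as the number of positive eigenvalues of an explicit matrix built from the vertex data and the Dirichlet-to-Neumann map at $\l=0$, and then reduces that count to a $3\times3$ problem using eigenvalue interlacing for rank-one perturbations. This yields the rows of Table~\ref{tab:neg_eig} directly and independently of $\e$, after which the asymptotic analysis only has to \emph{locate} the already-counted roots. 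Your monotonicity approach is more elementary and self-contained---and it does give at most two roots, since the product of two strictly increasing functions, each changing sign at most once, is positive-decreasing, then nonpositive, then positive-increasing---but reconstructing precisely which regions of $(a,b,c)$ produce one versus two roots, and then matching each root to type (B), (S) or (C), requires exactly the delicate case splitting you anticipate. Either route works; the Behrndt--Luger argument buys a clean separation between counting and asymptotics.
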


\renewcommand{\arraystretch}{1.2}
\setlength{\tabcolsep}{8pt}
\begin{table}
  \centering
  \begin{tabular}{c|c|c||c|c|c}
    \multicolumn{3}{c||}{Conditions}&type(B)&type(S)&type(C)\\[5pt]
    \hline
    \multirow{2}{*}{$\rank P=1$} & $z = \infty$ & $\mu<0$
                                   & & \checkmark $\alpha=|\mu|$ & \\
    \cline{2-6} & $z \neq \infty$ & $\mu(1+|z|^2)<-1$
                                   & \checkmark $\alpha=\kappa_1^2$ & & \\
    \hline \multirow{5}{*}{$\rank P = 0$}
                                   & \multirow{2}{*}{$a<0$} & $|c|^2 < a(b+1)$
                                 & \checkmark $\alpha=\kappa_0^2$ & \checkmark $\alpha=|a|$ & \\
    \cline{3-6} & & $|c|^2 \geq a(b+1)$
                                 & & \checkmark $\alpha=|a|$& \\
    \cline{2-6}
                                   &\multirow{2}{*}{$a=0$}
                                           & $c=0$,\ $b+1<0$
                                   & \checkmark $\alpha=\kappa_0^2$ & & \\
    \cline{3-6} & & $c\neq 0$
                                   & & & \checkmark $\alpha=|c|^{4/3}$ \\
    \cline{2-6} & $a>0$ & $|c|^2 > a(b+1)$
                                   & \checkmark $\alpha=\kappa_0^2$ & & \\ \hline
  \end{tabular}
  \caption{Description of the vertex condition which lead to negative
    eigenvalues.  The coefficient $\alpha$ of the leading term in
    \eqref{eq:eigB}-\eqref{2.21} is specified in the corresponding
    column.  When no coefficient $\alpha$ is specified, there is no
    eigenvalue of the corresponding type.  If the vertex conditions do
    not fit into either of the listed cases, there are no negative
    eigenvalues.  For explanations and examples of the vertex
    conditions, see \eqref{2.4}-\eqref{2.5}.}
  \label{tab:neg_eig}
\end{table}

\begin{thm}
  \label{th:efs}
  As $\e\to0+$, the normalized eigenfunctions of the operator $\Op_\e$
  associated with its negative eigenvalues satisfy
  \begin{align}\label{2.23}
    &\big\|\psi|_{s_\e}\big\|^2=O(\e),
    && \big\|\psi|_e\big\|^2=1+O(\e)
    && \text{for eigenvalue of type (B)},
    \\
    &\big\|\psi|_{s_\e}\big\|^2=1+O(\e^{1/2}),
    && \big\|\psi|_e\big\|^2=O(\e^{1/2})
    &&\text{for eigenvalue of type (S)},
       \label{2.24}
    \\
    &\big\|\psi|_{s_\e}\big\|^2=\frac{2}{3}+O(\e^{1/3}),
    && \big\|\psi|_e\big\|^2=\frac{1}{3}+O(\e^{1/3})
    &&\text{for eigenvalue of type (C)}.
       \label{2.25}
  \end{align} 
\end{thm}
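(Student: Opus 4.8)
The plan is to make the eigenfunctions fully explicit, compute the two partial $L_2$-norms in closed form, and then read off their asymptotics from the eigenvalue expansions supplied by Theorem~\ref{tm:asym}. Since the eigenvalue in question is negative, write $\lambda(\e)=-k^2$ with $k=k(\e)>0$. The Neumann condition at $v_\e$ and the Dirichlet condition at $v$ in \eqref{2.2} force
\[
  \psi|_{s_\e}(x_\e)=A\cosh(k x_\e),\qquad \psi|_e(x)=B\sinh(k x)
\]
for some $A=A(\e)$, $B=B(\e)$, and a direct integration gives
\[
  \big\|\psi|_{s_\e}\big\|^2=|A|^2\Big(\tfrac{\e}{2}+\tfrac{\sinh(2k\e)}{4k}\Big),\qquad
  \big\|\psi|_e\big\|^2=|B|^2\Big(\tfrac{\sinh(2k)}{4k}-\tfrac12\Big).
\]
Because $\psi$ is normalized, these two quantities add up to $1$, so it suffices to control the scale-invariant ratio $r(\e):=\big\|\psi|_{s_\e}\big\|^2\big/\big\|\psi|_e\big\|^2$, which is pinned down by the ratio $|A|/|B|$ coming from the two remaining (vertex) conditions in \eqref{2.2}. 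In all three types Theorem~\ref{tm:asym} gives $k=O(\e^{-1/2})$ and hence $k\e\to0$, so that, uniformly, $\big\|\psi|_{s_\e}\big\|^2=|A|^2\e\big(1+O((k\e)^2)\big)$.

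For an eigenvalue of type (B), $k$ converges to the positive root $\kappa_0$ or $\kappa_1$; in particular $k$ is bounded and bounded away from $0$, so $\tfrac{\sinh(2k)}{4k}-\tfrac12$ lies between two positive constants. Passing to the limit $\e=0$ in the vertex conditions (where $\psi|_{s_\e}(\e)\to A$ and $\psi|_{s_\e}'(\e)\to0$) one sees that the limiting $2\times2$ system reduces precisely to the scalar equation $\kappa\coth\kappa=-\mu(1+|z|^2)$ of \eqref{eq:k1_def} (resp. the first alternative in \eqref{eq:k0_def}), whose one-dimensional solution has $|A|/|B|$ bounded; here the conditions $\mu(1+|z|^2)<-1$, $|c|^2<a(b+1)$, etc., enter to guarantee a root $\kappa_0,\kappa_1>0$. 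Thus $\big\|\psi|_{s_\e}\big\|^2=O(\e)|B|^2$ and $\big\|\psi|_e\big\|^2=\Theta(1)|B|^2$; combining with $\big\|\psi|_{s_\e}\big\|^2+\big\|\psi|_e\big\|^2=1$ forces $|B|^2=\Theta(1)$ and yields \eqref{2.23}.

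For types (S) and (C), Theorem~\ref{tm:asym} gives $k\to\infty$, so $\tfrac{\sinh(2k)}{4k}-\tfrac12=\tfrac{\sinh^2 k}{2k}\big(1+O(ke^{-2k})\big)$ and therefore $\big\|\psi|_e\big\|^2=\tfrac{1}{2k}\,\big|\psi|_e(1)\big|^2\,(1+o(1))$, where $\psi|_e(1)=B\sinh k$ is the value at the central vertex. Types (S) and (C) occur only for $\rank\rP=0$ (case \eqref{2.5}) or for the $z=\infty$ degeneration of \eqref{eq:parametrization}; in the latter the long edge acquires a Dirichlet condition at $\vc$, so $\psi|_e\equiv0$ and \eqref{2.24} is trivial. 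In case \eqref{2.5} the first vertex condition reads $-\psi|_{s_\e}'(\e)=a\,\psi|_{s_\e}(\e)+c\,\psi|_e(1)$, i.e.
\[
  cB\sinh k=-A\big(k\sinh(k\e)+a\cosh(k\e)\big)=-A\big(k^2\e+a+O(\e)\big),
\]
using $k\e\to0$; when $c=0$ this forces $B=0$ and the estimate is trivial, so assume $c\neq0$. For type (S) we have $a<0$ and, by \eqref{2.20} with $\alpha=|a|$, $k^2\e=|a|+O(\e^{1/2})$, so the bracket is $O(\e^{1/2})$ and $\big\|\psi|_e\big\|^2=O(\e^{3/2})|A|^2$, negligible against $\big\|\psi|_{s_\e}\big\|^2=\e|A|^2(1+o(1))$; hence $r(\e)\to\infty$ and $\big\|\psi|_{s_\e}\big\|^2+\big\|\psi|_e\big\|^2=1$ gives \eqref{2.24}. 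For type (C) we have $a=0$, $c\neq0$, and by \eqref{2.21} with $\alpha=|c|^{4/3}$, $k^2\e=\alpha\e^{1/3}+O(\e^{2/3})$ and $\tfrac{1}{2k}=\tfrac{\e^{1/3}}{2\sqrt\alpha}(1+o(1))$; then $|B\sinh k|^2=\tfrac{\alpha^2}{|c|^2}\e^{2/3}|A|^2\big(1+O(\e^{1/3})\big)$ and, using $\alpha^{3/2}=|c|^2$,
\[
  \big\|\psi|_e\big\|^2=\tfrac12\,\e|A|^2\big(1+O(\e^{1/3})\big)=\tfrac12\,\big\|\psi|_{s_\e}\big\|^2\big(1+O(\e^{1/3})\big).
\]
Hence $r(\e)=2+O(\e^{1/3})$, and $\big\|\psi|_{s_\e}\big\|^2+\big\|\psi|_e\big\|^2=1$ yields \eqref{2.25}.

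I expect the only genuine work to be the bookkeeping of error terms in this last step, and in particular the near-cancellation $k^2\e+a=o(1)$ that characterizes type (S): resolving it requires the full expansion \eqref{2.20} of $\lambda(\e)$, not merely its leading term. The remaining points — isolating the degenerate sub-cases in which $\psi$ lives on a single edge (so the claimed estimates are vacuous) and checking the non-degeneracy $B\neq0$ of the limiting kernel vector in type (B) — are routine once the exact formulas above are in place.
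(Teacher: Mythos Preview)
Your approach is essentially the same as the paper's: write $\psi_s=\cC_s\cosh\kappa x_\e$, $\psi_e=\cC_e\sinh\kappa x$, compute the two partial norms by the same elementary integrals, use one of the two central vertex conditions to pin down the ratio $\cC_s/\cC_e$, and then substitute the eigenvalue asymptotics from Theorem~\ref{tm:asym}. The paper packages the result as the closed formula~(4.40) for $\|\psi_s\|^2$ and reads off the three cases directly, whereas you route through the ratio $r(\e)$, but the content is identical; one small slip is that in the decoupled subcase $c=0$ of type~(S) the \emph{first} row of $\rU'=\rT\rU$ does not by itself force $B=0$ (it becomes $0=0$ because $\kappa$ satisfies the short-edge secular equation), and you must invoke the second row---this is exactly the ``routine'' degeneracy you flag at the end.
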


It is interesting to note that in cases (S) and (C) a
non-vanishing proportion of the eigenfunction's norm localizes on the
edge of vanishing length.

To fully describe the resolvent we need to introduce further notation.
We let ${s}:=(0,1)$ and introduce the bounded operator
$\cS_\e: L_2({s}) \to L_2({s}_\e)$ acting as
$(\cS_\e u)(x):=u(\frac{x}{\e})$.  The mapping $\cS_\e \oplus \cI$ is
an isomorphism between linear spaces $L_2({s})\oplus L_2(e)$ and
$L_2(\G_\e) = L_2({s}_\e)\oplus L_2(e)$; we stress that this isomorphism does not preserve the
  $L_2$-norm. By $\cP_s: L_2(\G_\e)\to L_2({s}_\e)$ and
$\cP_e: L_2(\G_\e)\to L_2(e)$ we denote the natural restriction operators
\begin{equation}
  \label{eq:restriction_def}
  \cP_s: (u_\e, u) \mapsto u_\e,
  \qquad
  \cP_e: (u_\e,u) \mapsto u.
\end{equation}

Since the operator $\Op_\e$ is self-adjoint, its resolvent
$(\Op_\e-\l)^{-1}$ is well-defined for $\l\in\C\setminus\R$. We
introduce two auxiliary operators on the space
$L_2({s})\oplus L_2(e)$ by the formulas
\begin{equation*}
 \cR_e(\e,\l):=\cP_e (\Op_\e-\l)^{-1} (\cS_\e\oplus\cI),\qquad
 \cR_{{s}}(\e,\l):=\cS_\e^{-1}\cP_s (\Op_\e-\l)^{-1} (\cS_\e\oplus\cI).
\end{equation*}
Let us clarify the action of these operators. Given an element
$f=(f_{s},f_e)\in L_2({s})\oplus L_2(e)$, we let
$F=(\cS_\e\oplus \cI)f=(\cS_\e f_{s},f_e)$, which is a function in
$L_2(\G_\e)$.  We then apply the resolvent $(\Op_\e-\l)^{-1}$ to $F$
and the restriction of the result to $e$ is the action of the operator
$\cR_e$, while the restriction to the small edge ${s}_\e$ rescaled by
$\cS_\e^{-1}$ is the action of the operator $\cR_{{s}}$. It is clear
that $\cR_e$ and $\cR_{s}$ are bounded operators from
$L_2({s})\oplus L_2(e)$ into $L_2(e)$ and $L_2({s})$. Expressing then the second derivatives of $\cR_{s} (f_{s},f_\e)$ and $\cR_e (f_{s},f_\e)$ from the corresponding equations, see (\ref{3.1}), we see immediately that   the operators $\cR_e$ and $\cR_{s}$ are also bounded as acting into $H^2(e)$ and $H^2({s})$. We stress that here we mean just boundedness of the operators $\cR_e$ and $\cR_{s}$ and not a uniform boundedness of their norms in $\e$.
They can be
regarded as parts of the resolvent $(\Op_\e-\l)^{-1}$ in the sense of
the obvious identity
\begin{equation}\label{2.9}
(\Op_\e-\l)^{-1}= \big(\cS_\e\cR_{s}(\e,\l)\oplus\cR_e(\e,\l)\big) (\cS_\e^{-1}\oplus \cI).
\end{equation}
To state our results we also introduce an auxiliary operator
$\cL: L_2(e)\to H^2(e)$,
\begin{equation}
  \label{eq:Ldef}
  (\cL f_e)(x):=-\frac{1}{\sqrt{\l}}\int\limits_{0}^{x} \sin\sqrt{\l}(x-t) f_e(t)\,dt.
\end{equation}

\begin{thm}\label{th:res}
  For a fixed $\l\in \C\setminus\R$ the operators $\cR_{s}$ and
  $\cR_e$ are holomorphic in $\e$ as operators from
  $L_2({s})\oplus L_2(e)$ into $L_2({s})$ and $L_2(e)$
  correspondingly.

  For all choices of vertex conditions at
  $\vc$, the leading order of $\cR_e$ is given by
  \begin{equation}
    \label{eq:Re}
    \cR_e(\e,\l)f = \cB f_e \sin\sqrt{\l}x +\cL f_e+ O(\e),
  \end{equation}
  where $\cB: L_2(e)\to\C$ is a bounded linear functional and $\cL$ is
  given by~\eqref{eq:Ldef}.

  The leading terms of the Taylor series for the operator $\cR_{s}$
  are
  \begin{equation}
    \label{eq:Rell}
    \cR_{s}(\e,\l)f =
    \begin{cases}
      O(\e^2), & \text{if } \rank\rP=2, \\
      \e\cB_{s} f_{s} + O(\e^2),
      &\text{if }\rank\rP=1,\ z=\infty,\ \mu\ne0,\\
      \cB_{s} f_{s} +O(\e),
      &\text{if }\rank\rP=1,\ z=\infty,\ \mu=0,\\
      &\text{ or }\rank\rP=0,\ a=c=0 \\
      \cB_e f_e+O(\e) & \text{if }\rank\rP=1,\ z\neq \infty,\\
      &\text{ or } \rank\rP=0,\ (a,c)\neq(0,0),
    \end{cases}
  \end{equation}
  where $\cB_e$ and $\cB_{s}$ are some bounded linear functionals on
  $L_2(e)$ and on $L_2({s})$, correspondingly.
\end{thm}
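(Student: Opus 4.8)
The plan is to build an explicit representation of $(\Op_\e-\l)^{-1}$ by solving the boundary value problem $(\Op_\e-\l)(u_\e,u)=(F_\e,F_e)$ on each edge and then imposing the four conditions in \eqref{2.2}, tracking the dependence on $\e$ at every stage. First I would write the general solution on the long edge $e$ in the form $u(x)=c_1\cos\sqrt\l x+c_2\sin\sqrt\l x+(\cL F_e)(x)$, using the Neumann--type particular solution $\cL$ from \eqref{eq:Ldef}; the Dirichlet condition $u(0)=0$ immediately kills the cosine term, leaving one free constant $c_2$. On the short edge $s_\e$, after the rescaling $x_\e=\e y$, $y\in(0,1)$, the equation becomes $-\e^{-2}v''-\l v=F_s$, so the relevant small parameter enters through $\sqrt\l\,\e$; I would write the solution satisfying $v'(0)=0$ (Neumann at $v_\e$) as $v(y)=d\cos(\sqrt\l\,\e\,y)+(\text{particular integral in terms of }F_s)$, and note that $\cos(\sqrt\l\,\e\, y)=1+O(\e^2)$ and the particular integral is $O(\e^2)$ uniformly, which is the source of the $O(\e^2)$ entries in \eqref{eq:Rell}. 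The boundary data vector $\rU=(v(1),c_2\sin\sqrt\l+(\cL F_e)(1))^{\mathsf T}$ and $\rU'=(-\e^{-1}v'(1),\ -c_2\sqrt\l\cos\sqrt\l-(\cL F_e)'(1))^{\mathsf T}$ then get substituted into $\rP\rU=0$, $\rQ\rU'=\rT\rQ\rU$, giving a $2\times2$ linear system for the unknowns $(d,c_2)$ whose coefficient matrix $\rM(\e,\l)$ depends polynomially on $\cos(\sqrt\l\,\e)$, $\e^{-1}\sin(\sqrt\l\,\e)$, hence \emph{analytically} on $\e$ after multiplying through by an appropriate power of $\e$.

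Next I would verify invertibility of the relevant system for fixed $\l\in\C\setminus\R$ and all small $\e\ge 0$ (including $\e=0$): since $\Op_\e$ is self-adjoint the resolvent exists for every such $\e>0$, so $\det\rM(\e,\l)\ne0$ for $\e>0$; the $\e=0$ limit has to be checked case by case against the $\rank\rP$ trichotomy of \eqref{2.4}--\eqref{2.5}, and here the key algebraic fact is that $\l\in\C\setminus\R$ forces the limiting determinant to be nonzero because a vanishing determinant would produce an $L_2$ eigenfunction of a self-adjoint limiting operator (the decoupled problem on $e$, or a rank-one modification thereof) at a non-real $\l$. Once $\det\rM(\e,\l)\ne0$ on a neighbourhood of $\e=0$, Cramer's rule expresses $(d,c_2)$ as ratios of entire functions of $\e$ with non-vanishing denominator, hence holomorphic in $\e$; feeding these back into the edge solutions and undoing the rescaling $\cS_\e$ yields holomorphy of $\cR_e$ and $\cR_s$ as $L_2({s})\oplus L_2(e)\to L_2(e)$ and $\to L_2({s})$, and (via \eqref{3.1} to control second derivatives, as already noted in the text) into $H^2$.

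For the leading-order formulas, I would Taylor-expand the Cramer ratios at $\e=0$. For $\cR_e$: the coefficient $c_2$ tends to a bounded linear functional $\cB$ applied to $f_e$ (the $f_s$--contribution enters only at order $\e$ through $\rU$ on the short edge, since $v(1)=d+O(\e^2)$ and $\e^{-1}v'(1)=O(\e)$ after the particular-integral bound), giving \eqref{eq:Re} with the $\cL f_e$ term coming directly from the particular solution. For $\cR_s$: the output on the short edge is $\cS_\e^{-1}(v)=v(\cdot)=d+O(\e^2)$, so everything is governed by the leading behaviour of $d$, which in turn is read off from the row of $\rM^{-1}$ corresponding to $d$. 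The four cases in \eqref{eq:Rell} correspond exactly to how the matrix $\rM(0,\l)$ degenerates: when $\rank\rP=2$ the short edge fully decouples with a Neumann--Neumann segment, forcing $d=O(\e^2)$; when $\rank\rP=1$, $z=\infty$, the projection acts only on the first component and the distinction $\mu=0$ vs.\ $\mu\ne0$ changes whether $d$ is $O(1)$ or $O(\e)$; when the short edge "feels" the long edge through an off-diagonal coupling ($z\ne\infty$, or $\rank\rP=0$ with $(a,c)\ne(0,0)$), $d$ is driven at leading order by $f_e$, producing the functional $\cB_e$. The main obstacle I anticipate is precisely this bookkeeping of the degenerate $\e=0$ limits: the determinant $\det\rM(\e,\l)$ vanishes to different orders in $\e$ in the different cases (this is the same mechanism that later produces the fractional eigenvalue rates), so one must factor out the correct power of $\e$ in each case before applying Cramer's rule, and verify that the \emph{reduced} determinant is nonzero at $\e=0$ for $\l\in\C\setminus\R$ — a uniformity statement that, while elementary, requires care to state cleanly across all vertex conditions in Table~\ref{tab:neg_eig}.
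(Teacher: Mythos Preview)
Your approach is essentially the paper's: write the edge solutions with one free constant each (your $d,c_2$ are the paper's $\cC_s,\cC_e$), impose the vertex conditions at $v_c$ as a $2\times2$ linear system, solve by Cramer's rule with a determinant ($g_0$ or $h_0$ in the paper) that is holomorphic in $\e$ and nonzero for $\l\notin\R$, and Taylor-expand case by case. Two minor slips to correct: for $\rank\rP=2$ the short edge carries Neumann--Dirichlet (not Neumann--Neumann) conditions; and the anticipated obstacle of $\det\rM$ vanishing at $\e=0$ in fact arises only in the decoupled subcase $\rank\rP=0$, $a=c=0$ (where the paper simply writes the decoupled formulas directly rather than factoring out $\e$), and this degeneracy is \emph{not} the mechanism behind the fractional eigenvalue rates --- those come from the $k$-scaling of the roots of the secular equation, not from $\e$-degeneracy of the resolvent determinant at fixed non-real $\l$.
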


It is interesting to note that both parts of the resolvent are
holomorphic despite that in some cases, detailed in Theorem~\ref{tm:asym},
the eigenvalues are functions of fractional powers of $\e$.

\subsection{Comparison with previous results}
\label{sec:discussion}

Let us briefly discuss our results in comparison to previous related
works \cite{BerLatSuk_am19,Bor_am22} (see also \cite{Cac_s19} which
has a different scaling in the vertex conditions).

\newcommand\Oplim{\Op_{\mathrm{lim}}}

The focus of \cite{BerLatSuk_am19} was on the norm resolvent
convergence to the natural limiting graph operator $\Oplim$ obtained
from \eqref{2.1}-\eqref{2.2} as follows: only the length 1 edge
remains and the vertex condition at $\vc$ is obtained by
substituting\footnote{Intuitively, the derivative $u'_\e$ does not
  change very much over a short edge; since $u'_\e(0)=0$, we also
  expect $u'_\e$ to be close to 0 on the other end.} $u'_\e(\e) = 0$
and eliminating $u_\e(\e)$ from the conditions \eqref{2.2}.  We then
obtain the vertex condition
\begin{equation}
  \label{eq:vc_limiting}
  -u'(1) = \gamma u(1), \qquad \gamma \in \R \cup \{\infty\},
\end{equation}
where $\gamma = \infty$ should be interpreted as the Dirichlet
condition $u(1)=0$.  The dependence of $\gamma$ on the original
conditions at the vertex $\vc$ will not be important to our discussion.

Convergence to this limiting graph operator was established in
\cite{BerLatSuk_am19} under a sufficient ``non-resonance'' condition
\cite[Cond.~3.2]{BerLatSuk_am19}.  We remark that in the special case
of a graph with \emph{all} edges of order $\e$, the condition of
\cite[Cond.~3.2]{BerLatSuk_am19} was shown to be not only sufficient
but also necessary \cite{BerCdV_prep23}.  The
non-resonance\footnote{The name ``non-resonance'' was chosen due to an
  analogy to Sommerfeld radiation condition for resonances, as it seeks
  to exclude the situation where non-zero values on the short edges
  occur in the absence of any input from the order 1 edges.} condition
of \cite{BerLatSuk_am19} is formulated exclusively in terms of the
boundary values of the functions on edges.  In the present setting,
the condition can be formulated as follows: if
$u(1) = u'(1) = u'_\e(\e) = 0$ then the vertex conditions at $\vc$
should enforce that $u_\e(\e)=0$.

Direct inspection of all possible conditions at $\vc$ shows that the
only cases where \cite[Cond.~3.2]{BerLatSuk_am19} is \emph{not}
satisfied are
\begin{enumerate}
\item \label{item:1} $\rank\rP=1$, $z=\infty$, $\mu=0$,
\item \label{item:2} $\rank\rP=0$, $a=c=0$.
\end{enumerate}
In all other cases, \cite[Thm.~3.5]{BerLatSuk_am19} guarantees that
(using our present notation)
\begin{align}
  \label{eq:BLSi}
  & \cP_e (\Op_\e-\l)^{-1} \cP_e^* \ \to \
    (\Oplim-\l)^{-1},
    \qquad\text{in operator norm on }L^2(e),\\
  \label{eq:BLSii}
  & \cP_s (\Op_\e-\l)^{-1} \ \to\  0
    \qquad\text{in operator norm }L_2(\G_\e)\to L_2({s}_\e).
\end{align}
Equation~\eqref{eq:Re} in Theorem~\ref{th:res} of the present work
shows that \eqref{eq:BLSi} holds even when the non-resonance condition
above is violated.  In contrast, as can be seen from equation
\eqref{eq:Rell}, convergence in \eqref{eq:BLSii} holds \emph{if and
  only if} the non-resonance condition is satisfied.  Namely, cases
\eqref{item:1}-\eqref{item:2} above correspond to the third case in
\eqref{eq:Rell} with a leading term of order 1.  We remark that
because of the norm-distorting rescaling $\cS_\e$ in the definition of
$\cR_s$, the last case of \eqref{eq:Rell} actually corresponds to
$\| \cP_s (\Op_\e-\l)^{-1} \|_{L_2(\G_\e)\to L_2({s}_\e)} =
O(\sqrt{\e})$.

Furthermore, \cite[Thm.~3.6]{BerLatSuk_am19} establishes convergence
of spectra on every compact (again, under the non-resonance
condition).  In cases \eqref{item:1} and \eqref{item:2} above, the
graph decouples into the edge of length $\e$ with Neumann conditions
at both ends, $u'_\e(0) = u'_\e(\e)=0$, and the edge of length $1$.
The obstacle to the convergence of spectra is the constant
eigenfunction localized on the vanishing edge $s_\e$.  Notably, localization
of the eigenfunctions of type (S) and (C) on $s_\e$, see
Theorem~\ref{th:efs}, does not prevent convergence of spectra (on any
compact) since the corresponding eigenvalues escape to $-\infty$.

The behavior of the resolvents of general elliptic operators on
general graphs with small edges was studied in \cite{Bor_am22} under
a different non-resonant condition.
For our model this
condition is formulated as follows. Consider the operator
$\Op_\infty=-\frac{d^2\ }{dx^2}$ on the graph consisting of a finite edge ${s}$ and
a lead $e_\infty:=(0,+\infty)$, connected by the vertex $\vc$.  The other
end-point of the edge ${s}$ is denoted by $v_1$.  At $v_1$ the Neumann
condition is imposed, while the vertex condition at $\vc$ is obtained
by a suitable rescaling and taking the limit $\e\to0$, namely
\begin{equation*}
\rP \rU_\infty=0,\qquad \rQ \rU_\infty'=0, \qquad
\rU_\infty:= \begin{pmatrix}
u_{s}(1)
\\
u_\infty(0)
\boldsymbol{}\end{pmatrix},\qquad \rU_\infty':= \begin{pmatrix}
-u_{s}'(1)
\\
u_\infty'(0)
\end{pmatrix}.
\end{equation*}
Here $u_{s}$ is the restriction of a given function to ${s}$, while
$u_\infty$ is the restriction to $e_\infty$. Due to the presence of the lead
$e_\infty$, the operator $\Op_\infty$ has essential spectrum at
$[0,+\infty)$. The non-resonance condition from \cite{Bor_am22}
prohibits existence of an embedded eigenvalue at the bottom of this
essential spectrum.  In view of the simple structure of the operator
$\Op_\infty$, an embedded eigenvalue must correspond to an
eigenfunction which is constant on ${s}$ and identically zero on
$e_\infty$.  This is possible in the following two cases:
\begin{itemize}
\item $\rank\rP=1$, $z=\infty$,
\item $\rank\rP=0$.
\end{itemize}
Correspondingly, the non-resonance condition of \cite{Bor_am22} holds
in all cases except the above.

Under the non-resonance condition, Theorem~2.1 in \cite{Bor_am22}
guarantees that the resolvent is holomorphic and that the leading
terms of both $\cR_{s}$ and $\cR_e$ are governed only by $f_e$, see
\cite[Eq.~(2.30)]{Bor_am22}. As Theorem~\ref{th:res} of the present
work shows, the operators $\cR_{s}$ and $\cR_e$ are holomorphic in all
cases.  However, when the non-resonance condition is broken, the
leading term in the Taylor series for $\cR_{s}$ may involve a
functional depending on $f_{s}$, as seen in equation~\eqref{eq:Rell}.

As seen from Theorem~\ref{tm:asym} and Table~\ref{tab:neg_eig} above,
negative eigenvalues that are unbounded as functions of $\e$ can occur
when the non-resonance condition of \cite{Bor_am22} is violated; these
are eigenvalues of the type (S) or (C).  We also observe that,
according to (\ref{2.24}), (\ref{2.25}), the associated eigenfunctions
are either localized only on the short edge or both on the finite and
short edges. This is in contrast with the eigenfunctions associated
with the eigenvalues of type (B), which localize exclusively on the
edge of constant length, see (\ref{2.23}).

\section{Resolvent}

In this section we prove Theorem~\ref{th:res}.
Let $f=(f_{s},f_e)\in L_2({s})\oplus L_2(e)$ be an arbitrary function and denote
\begin{equation}\label{3.6}
 F:=(\cS_\e\oplus \cI)f=(\cS_\e f_{s},f_e), \qquad (u_{{s}_\e},u_e):=(\Op_\e-\l)^{-1}F.
\end{equation}
Comparing the above formulas with the  definition of the operators $\cR_{s}$ and $\cR_e$ in (\ref{2.3}), we see that
\begin{equation}\label{3.7}
\cR_{s}(\e,\l)f=\cS_\e^{-1}u_{{s}_\e},\qquad \cR_e(\e,\l)f=u_e.
\end{equation}

In view of the definition of the operator $\Op_\e$, the components of the function $(u_{{s}_\e},u_e)$ solve the problems
\begin{equation}\label{3.1}
\begin{aligned}
&
-\frac{d^2 u_{{s}_\e}}{dx_\e^2}-\l u_{{s}_\e} =\cS_\e f_{s} && \text{on}\quad {s}_\e,\quad && u_{{s}_\e}'(0)=0,
\\
&
-\frac{d^2 u_e}{dx^2}-\l u_e =f_e && \text{on}\quad e, && u_e(0)=0,
\end{aligned}
\end{equation}
and this is why they can be found explicitly:
\begin{equation}\label{3.2}
\begin{aligned}
u_e(x) & =-\frac{1}{\sqrt{\l}}\int\limits_{0}^{x} \sin\sqrt{\l}(x-t) f_e(t)\,dt + \cC_e \sin\sqrt{\l} x, && x\in e
\\
&=(\cL f_e)(x) + \cC_e \sin\sqrt{\l} x, \\
u_{{s}_\e}(x_\e)&=-\frac{1}{\sqrt{\l}}\int\limits_{0}^{x_\e} \sin\sqrt{\l}(x_\e-r) f_{s}\left(\frac{r}{\e}\right)\,dr + \cC_{s} \cos\sqrt{\l} x_\e
\\
&= \big(\cL^\e f_s\big)(y) + \cC_{s} \cos\e\sqrt{\l}y, && y:=\frac{x_\e}{\e}\in s,
\end{aligned}
\end{equation}
where the branch of the square root can be chosen arbitrarily,
$\cC_{s}$ and $\cC_e$ are some constants, $\cL$ is as defined
in~\eqref{eq:Ldef} and we also introduced
\begin{equation}
  \label{eq:Lepsilon}
  \big(\cL^\e f_s\big)(y) =
  -\frac{\e}{\sqrt{\l}}\int\limits_{0}^{y} \sin\e\sqrt{\l} (y-t)
  f_{s}(t)\,dt
  \ =\  O(\e^2).
\end{equation}

The vectors $\rU$ and $\rU'$ appearing in \eqref{2.2} can be
represented as $\rU=\rV -  \mathrm{W}$, $\rU'=\rV' - \mathrm{W}'$,
with
\begin{equation}\label{3.4}
\begin{aligned}
&\rV:=
\begin{pmatrix}
\cC_{s} \cos\sqrt{\l}\e
\\
\cC_e \sin\sqrt{\l}
\end{pmatrix},
&&\rV':=\sqrt{\l}\begin{pmatrix}
\cC_{s}  \sin\sqrt{\l}\e
\\
-\cC_e  \cos\sqrt{\l}
\end{pmatrix},
\\
&\mathrm{W}:=\frac{1}{\sqrt{\l}}
\begin{pmatrix}
\displaystyle\e\int\limits_{0}^{1} \sin\e\sqrt{\l}(1-t) f_{s}(t)\,dt
\\
\displaystyle\int\limits_{0}^{1} \sin\sqrt{\l}(1-t) f_e(t)\,dt
\end{pmatrix},
&&\mathrm{W}':=-
\begin{pmatrix}
\displaystyle\e\int\limits_{0}^{1} \cos\e\sqrt{\l}(1-t) f_{s}(t)\,dt
\\
\displaystyle\int\limits_{0}^{1} \cos\sqrt{\l}(1-t) f_e(t)\,dt
\end{pmatrix},
\end{aligned}
\end{equation}
and the boundary condition at $\vc$ in (\ref{2.2}) becomes
\begin{equation}\label{3.3}
\rP \rV= \rP\mathrm{W},\qquad
\rQ\rV'-\mathrm{TQ}\rV= \rQ \mathrm{W}'- \mathrm{TQ} \mathrm{W}.
\end{equation}
Once we solve this linear system of equations with respect to
$\cC_{s}$ and $\cC_e$, we will find the functions $u_{s}$ and $u_e$
and thus the resolvent through \eqref{3.7}.  At this point we observe
that $\mathrm{W}$ and $\mathrm{W}'$ are holomorphic in $\e$ as
operators from $L_2({s})\oplus L_2(e)$ to $\C^2$.  The coefficients
$\cC_{s}$ and $\cC_e$ will be shown to be linear combinations of the
entries of $\mathrm{W}$ and $\mathrm{W}'$, and therefore also holomorphic.

The solution of \eqref{3.3} depends on the rank of the projects $\rP$
and $\rQ$ and will be addressed case by case.

\subsection{Case $\rank\rP=2$}

This is the simplest case due to formulas (\ref{2.4}); we immediately
obtain $\rV=\mathrm{W}$ and
\begin{equation}\label{3.5}
  \cC_{s} =\frac{W_1}{\cos\sqrt{\l}\e},
  \qquad
  \cC_e = \frac{W_2}{\sin\sqrt{\l}},
\end{equation}
where $W_1$ and $W_2$ are the corresponding entries of the vector
$\mathrm{W}$, equation~\eqref{3.4}.  These identities and (\ref{3.2}),
(\ref{3.7}) yield:
\begin{equation}\label{3.8}
\begin{aligned}
(\cR_{s}(\e,\l)f)(y)=&\big(\cL^\e f_s\big)(y)
+\frac{\e\cos\sqrt{\l} \e y}{\sqrt{\l}\cos\sqrt{\l}\e}\int\limits_{0}^{1} \sin\e\sqrt{\l}(1-t) f_{s}(t)\,dt,\quad && y\in{s},
\\
(\cR_e(\e,\l)f)(x)=&\,(\cL f_e)(x) + \frac{\sin\sqrt{\l} x}{\sqrt{\l}\sin\sqrt{\l}} \int\limits_{0}^{1} \sin\sqrt{\l}(1-t) f_e(t)\,dt, \qquad &&x\in e.
\end{aligned}
\end{equation}
The obtained formulas imply that the operators $\cR_{s}$ and $\cR_e$
are holomorphic in $\e$, the latter is even independent of $\e$, and
relations (\ref{eq:Re}), (\ref{eq:Rell}) are satisfied with
\begin{equation}
  \label{eq:Bf_e}
  \cB f_e = \frac{1}{\sqrt{\l}\sin\sqrt{\l}}
  \int\limits_{0}^{1} \sin\sqrt{\l}(1-t) f_e(t)\,dt.
\end{equation}

\subsection{Case $\rank \rP=1$}

Substituting representations (\ref{eq:parametrization}) into
(\ref{3.3}), we solve this linear system of equations:
\begin{equation}\label{3.10}
\begin{aligned}
  &\cC_{s}
  = \frac{(W_1+zW_2) \sqrt{\l}\cos\sqrt{\l}
    + \big(\mu(1+|z|^2)W_1 - |z|^2 W_1' + zW_2'\big) \sin\sqrt{\l}
    }{g_0(\e,\sqrt{\l})},\\
    &\cC_e
    = \frac{\big(\mu(1+|z|^2)W_2 + \overline{z} W_1' -
      W_2'\big) \cos\sqrt{\l}\e
      - \overline{z}(W_1+zW_2) \sqrt{\l}\sin\sqrt{\l}\e}
    {g_0(\e,\sqrt{\l})},
\end{aligned}
\end{equation}
where
\begin{equation}
  \label{eq:g0def}
  g_0(\e,k):=(k\cos k+\mu\sin k)\cos k \e - |z|^2 (k\sin k\e - \mu\cos k\e)\sin k.
\end{equation}
The function $g_0(\e,k)$ is clearly holomorphic in $\e$ and has zeros which correspond to the poles of the resolvent. These poles are the square roots of eigenvalues of $\Op_\e$, which must be real by self-adjointness. With $k=\sqrt{\l}\notin\mathbb{R}$, we see that $g_0(\e,\sqrt{\l})\neq0$, and for sufficiently small $\e$, we have
\begin{equation*}
g_0(\e,k)=k\cos k + (1+|z|^2)\mu\sin k+O(\e).
\end{equation*}
Hence, the functions $\cC_{s}$ and $\cC_e$, regarded as functionals on $L_2({s})\oplus L_2(e)$,  are holomorphic in $\e$. Returning back to functions $u_{s}$ and $u_e$ in (\ref{3.2}) and using formulas (\ref{3.7}), we conclude that the operators $\cR_{s}$ and $\cR_e$ are holomorphic in $\e$ in this case, too.
By straightforward calculations we see that the leading terms of their Taylor series are given by  (\ref{eq:Re}), (\ref{eq:Rell}).

In the subcase $z=\infty$, formulas (\ref{3.10}) become
\begin{equation}\label{3.20}
\begin{aligned}
&\cC_{s}= \frac{W'_1-\mu W_1}{\sqrt{\l}\sin\sqrt{\l}\e - \mu\cos \sqrt{\l}\e}, \qquad
&\cC_e=\frac{W_2}{\sin\sqrt{\l}},
\end{aligned}
\end{equation}
and together with (\ref{3.2}), (\ref{3.7}) they prove  (\ref{eq:Re}), (\ref{eq:Rell}).

\subsection{Case $\rank \rP=0$}

Due to (\ref{2.5}) the first equation in system (\ref{3.3}) becomes
trivial and solving the other we find:
 \begin{equation}\label{3.12}
 \begin{aligned}
   &\cC_{s}
   = \frac{ \big(-W_1'+aW_1+cW_2) \sqrt{\l}\cos\sqrt{\l}
     + \big(-bW_1'+cW_2'+(ab-|c|^2)W_1\big) \sin\sqrt{\l}}
    {h_0(\e,\sqrt{\l})},
     \\
     &\cC_e
     = \frac{\big(\overline{c}W_1'-aW_2'+(ab-|c|^2)W_2\big)
       \cos\sqrt{\l}\e
      + \big(W'_2 - \overline{c}W_1 - bW_2\big) \sqrt{\l}\sin
      \sqrt{\l}\e}
    {h_0(\e,\sqrt{\l})},
 \end{aligned}
\end{equation}
where
\begin{equation}\label{3.13}
  h_0(\e,k) := -(k\sin k\e-a\cos k\e)(k\cos k+b\sin k)
  - |c|^2\sin k\cos k\e.
\end{equation}
The function $h_0(\e,k)$ is obviously holomorphic in $\e$ and as we noticed earlier with $g_0(\e,k)$, its roots must correspond to real values of $k$. Since $k=\sqrt{\l}\notin\mathbb{R}$, we find that $h_0(\e,\sqrt{\l})\neq0$ for $\varepsilon>0$. As $\e\to+0$, the leading terms of its Taylor expansion are
\begin{equation}\label{3.14}
h_0(\e,k)=(ab-|c|^2)\sin k+a k\cos k -\e  k^2  (k\cos k+b\sin k) +O(\e^2).
\end{equation}
If
\begin{equation}\label{3.15}
 ab-|c|^2\ne0\qquad\text{or}\qquad a\ne0,
\end{equation}
then the leading term in the above expansion is non-zero.  The
coefficients $\cC_{s}$ and $\cC_e$ and, consequently, the operators
$\cR_{s}$ and $\cR_e$ again holomorphic in $\e$.  The leading terms of
the Taylor expansion of $\cR_{s}$ and $\cR_e$ can be found by
straightforward calculations, leading to (\ref{eq:Re}),
(\ref{eq:Rell}).

If condition \eqref{3.15} does not hold, i.e. if
\begin{equation}\label{3.17}
  a=0\qquad\text{and}\qquad c=0,
\end{equation}
the leading term in expansion \eqref{3.14} disappears, but the order $\e$
term is still non-zero.  In this case formulas
(\ref{3.12}) simplify:
\begin{align*}
\cC_{s}=\frac{W_1'}{\sqrt{\l}\sin \sqrt{\l}\e}, \qquad
\cC_e=\frac{-W_2'+bW_2}{\sqrt{\l}\cos\sqrt{\l} +b\sin\sqrt{\l}}.
\end{align*}
The leading terms of the Taylor series in $\e$ of the the holomorphic
operators $\cR_e$ and $\cR_{s}$ are found by straightforward
calculations to be given by (\ref{eq:Re}), (\ref{eq:Rell}).

\section{Eigenvalues and eigenfunctions}

In this section we establish Theorems~\ref{tm:asym} and \ref{th:efs}. The resolvent of the operator $\Op_\e$ has poles at its eigenvalues. In view of formula (\ref{2.9}) for this resolvent, we conclude that these poles can appear only as the poles of the operators $\cR_{s}$ and $\cR_e$.  The formulas for the operators $\cR_{s}$ and $\cR_e$ obtained in the previous section show that such poles should coincide with the roots of the equations
\begin{equation}\label{4.1}
\begin{aligned}
&\cos k\e \sin k=0 \quad&&\text{if}\quad \rank\rP=2,
\\
&g_0(\e,k)=0 &&\text{if}\quad \rank\rP=1,
\\
&h_0(\e,k)=0 &&\text{if}\quad \rank\rP=0,
\end{aligned}
\end{equation}
where we have denoted $k:=\sqrt{\l}$.

Since $\l=k^2$ and we are interested in negative eigenvalues, we seek $k=\iu\kappa$, where $\kappa\in\mathbb{R}$ and $\kappa>0$. Then equations (\ref{4.1}) become 
\begin{align}\label{4.2}
&\cosh \k\e\sinh\k=0 &&\text{if}\quad \rank\rP=2,
\\
&(\k\cosh\k+\mu\sinh\k)\cosh \k \e + |z|^2 (\k\sinh \k\e + \mu\cosh \k\e)\sinh \k=0 &&\text{if}\quad \rank\rP=1,\label{4.3}
\\
&(\k\sinh\k\e + a\cosh\k\e)(\k\cosh\k + b\sinh\k)
- |c|^2\sinh\k\cosh\k\e=0&&\text{if}\quad \rank\rP=0.
\label{4.4}
\end{align}
We also mention that these equations can be obtained by a straightforward analysis of the eigenvalue equation for the operator $\Op_\e$.

It is easy to see that equation (\ref{4.2}) has no positive roots for each $\e$ and hence, the operator $\Op_\e$ possesses no negative eigenvalues in the case $\rank\rP=2$.

\subsection{Case $\rank\rP=1$}\label{sec4.1}

We divide equation (\ref{4.3}) by $\cosh \k\e \sinh \k$ and we get an equivalent equation
\begin{equation}\label{4.5}
\k\coth \k + |z|^2 \k\tanh \k\e=-\mu (1+|z|^2).
\end{equation}
The function in the left hand side of this equation is monotone in $\k>0$ and hence, its minimum is attained at $\k=0$ and it is equal to $1$. Therefore, equation (\ref{4.5}) has no positive roots as $-\mu(1+|z|^2)\leqslant 1$ and it possesses a unique positive root $\k=\k(\e)$ as $-\mu(1+|z|^2)>1$.
Hence, for finite $z$ the operator $\Op_\e$ possesses negative eigenvalues only if $-\mu(1+|z|^2)>1$ and in this case it has just a single eigenvalue.

By the implicit function theorem for holomorphic functions
\cite[Thm.~1.3.5 and Rem.~1.3.6]{Narasimhan_analysis_manifolds} we immediately conclude that the root $\k(\e)$ is holomorphic in $\e$ and $\k(0)=\k_1$, where $\k_1$ is the unique root of the equation
\begin{equation}\label{3.19}
\k \coth \k=-\mu(1+|z|^2).
\end{equation}



For this eigenvalue $\l(\e)=-\k^2(\e)$ of type (B), we seek to determine the norm of the corresponding eigenfunction $\psi = (\psi_s,\psi_e)\in L^2({s}_\e)\oplus L^2(e)$, which can be represented as
\begin{equation}\label{4.37}
\psi_s = \cC_s \cosh\k x_\e, \qquad \psi_e = \cC_e \sinh\k x
\end{equation}
where the coefficients $\cC_{s}$ and $\cC_e$ are determined by the boundary conditions at the central vertex $v_c$. There are two independent conditions to be satisfied, and for the case where $\rank\rP=1$, one condition comes from each of the last two equations of (\ref{2.2}). However, we need only impose
\begin{equation}\label{4.38}
\psi_s(\e) + z\psi_e(1)=0,
\end{equation}
because the other condition is then automatically satisfied by nature of $\psi$ being an eigenfunction. This leads to
\begin{equation}\label{4.14}
\cC_{s}=-\b_1(\k,\e)z\sinh\k,\qquad \cC_e=\b_1(\k,\e)\cosh\e\k,
\end{equation}
where $\beta_1$ is determined by the normalization
\begin{equation}\label{4.41}
\|\psi\|^2=\|\psi_s\|^2+\|\psi_e\|^2=1. 
\end{equation}
By straightforward calculation, we see that
\begin{equation}\label{4.33}
\|\psi_s\|^2=\frac{\sinh2\k\e+2\k\e}{4\k} |\cC_{s}|^2, \qquad
\|\psi_e\|^2=\frac{\sinh2\k-2\k}{4\k} |\cC_e|^2.
\end{equation}
Combining this with (\ref{4.14}) and choosing appropriate $\b_1$, we find that
\begin{equation}
\|\psi_s\|^2=\frac{(\sinh2\k\e+2\k\e)|z|^2\sinh^2\k}{(\sinh2\k\e+2\k\e)|z|^2\sinh^2\k+(\sinh2\k-2\k)\cosh^2\k\e}.
\end{equation}
We then apply $\k(\e)=\k_1+O(\e)$ to obtain (\ref{2.23}), where $\|\psi_e\|^2$ is most easily found from (\ref{4.41}).

If $z=\infty$, then equation (\ref{4.5}) is to be rewritten as
\begin{equation}\label{4.6}
\k\tanh \k\e=-\mu.
\end{equation}
For non-negative $\mu$ it has no positive solution and in this case the operator $\Op_\e$ possesses no negative eigenvalues. For negative $\mu$ we make the change $\tau:=\k\e$ and rewrite equation (\ref{4.6}) as
\begin{equation}\label{4.8}
\tau \tanh\tau=-\e\mu.
\end{equation}
In view of the Taylor series for the function $\tau\mapsto \tau\tanh\tau$ about zero, this function can be represented as $\tau\tanh\tau=T(\tau^2)$, where $T=T(t)$ is
a holomorphic function in some fixed neighborhood of the origin in the complex  plane and $T(0)=0$. Then we can rewrite equation (\ref{4.8}) as
\begin{equation*}
T(t)=-\e\mu,\qquad t:=\tau^2,
\end{equation*}
and by the implicit function theorem \cite[Thm.~1.3.5 and
Rem.~1.3.6]{Narasimhan_analysis_manifolds} we immediately see that
this equation possesses a unique root $t=\e\mu t_0(\e\mu)$, where
$t_0$ is holomorphic at zero and
$t_0(0)=0$. 
Returning back to equation (\ref{4.6}), we see that it also possesses a unique root $\k(\e)$ such that the function $\k^2(\e)$ is meromorphic in $\e\mu$.
Hence, in the considered case the operator $\Op_\e$ possesses a unique negative eigenvalue $\l(\e)=-\k^2(\e)$ of type (S), which is meromorphic in $\e$. In this case, the associated eigenfunction is determined by $\cC_e=0$, and (\ref{2.24}) holds because $\|\psi_s\|^2=1$ independent of $\e$.

\subsection{Case $\rank\rP=0$ preliminaries}

In the considered case we again suppose that $k=\iu\k$, then divide equation (\ref{4.4}) by
$\cosh\k\e \sinh k$ and this results in the equation
\begin{equation}\label{4.7}
(\k\tanh \k\e+a)(\k\coth\k+b) - |c|^2=0,
\end{equation}
and the associated eigenfunctions satisfy (\ref{4.37}) and (\ref{4.33}).

The study of equation (\ref{4.7}) is more complicated than that of
(\ref{4.5}) and here it is convenient to know a priori the number of
its positive roots depending on $a$, $b$ and $c$, that is, the number
of the negative eigenvalues of the operator $\Op_\e$. The latter can
be found by using the Behrndt--Luger formula \cite{BehLug_jpa10}.

\begin{lem}\label{lem:number_eig_rank2}
  The operator $\Op_\e$ with vertex conditions given by
  \eqref{2.2}-\eqref{2.3} and \eqref{2.5} has
  \begin{enumerate}
  \item\label{2eig} two negative eigenvalues if
    \begin{equation}\label{2.14}
      |c|^2 - ab < a < 0;
    \end{equation}
    
  \item\label{1eig} one negative eigenvalue if
    \begin{equation}\label{2.15}
      |c|^2 -ab > a
    \end{equation}
    or
    \begin{equation}\label{2.16}
      |c|^2 = a(b+1),\qquad a+b+1<0;
    \end{equation}
    
  \item no negative eigenvalues otherwise.
  \end{enumerate}
\end{lem}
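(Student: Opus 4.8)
The plan is to count, as a function of the parameters $a,b,c$, the positive roots of the transcendental equation \eqref{4.7}, which by the discussion preceding the lemma is exactly the number of negative eigenvalues of $\Op_\e$. I would not attack \eqref{4.7} head-on but instead use the Behrndt--Luger trace formula \cite{BehLug_jpa10}, which relates the number of negative eigenvalues of a self-adjoint extension to the signature of a finite-dimensional Hermitian matrix built from the boundary data; since the graph is decoupled at $v_\e$ (Neumann) and $v$ (Dirichlet), the only nontrivial coupling is the $2\times 2$ matrix $\rT=\begin{pmatrix} a & c\\ \bar c & b\end{pmatrix}$, so this should reduce the whole problem to linear algebra plus a one-dimensional spectral comparison. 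Concretely, the number of negative eigenvalues of $\Op_\e$ should equal the number of negative eigenvalues of $\rT - M(0)$, where $M$ is the (diagonal) Titchmarsh--Weyl matrix of the two decoupled pieces evaluated at spectral parameter $0$: the Neumann Laplacian on $(0,\e)$ contributes $m_{s}(0) = 0$ (the constant function is the zero-energy solution with vanishing derivative at $x_\e=\e$, giving boundary value $1$ and conormal derivative $0$), and the Dirichlet Laplacian on $(0,1)$ contributes $m_e(0) = -1$ (the zero-energy solution vanishing at $x=0$ is $x$, with boundary value $1$ at $x=1$ and conormal derivative $-1$, hence $m_e=-1$). So the count is the number of negative eigenvalues of $\rT - \operatorname{diag}(0,-1) = \begin{pmatrix} a & c\\ \bar c & b+1\end{pmatrix}$.

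Next I would carry out the sign analysis of the $2\times 2$ Hermitian matrix $N:=\begin{pmatrix} a & c\\ \bar c & b+1\end{pmatrix}$. Its eigenvalues are real, their product is $\det N = a(b+1)-|c|^2$ and their sum is $\operatorname{tr} N = a+b+1$. Two negative eigenvalues occur iff $\det N>0$ and $\operatorname{tr} N<0$, i.e. $a(b+1)>|c|^2$ and $a+b+1<0$; from $a(b+1)>|c|^2\geq 0$ and $a+b+1<0$ one deduces $a<0$ (if $a>0$ then $b+1>0$ hence $a+b+1>0$, contradiction; if $a=0$ then $|c|^2<0$, impossible), and then $a(b+1)>|c|^2$ rewrites as $|c|^2-ab<a$, so condition \eqref{2.14} $|c|^2-ab<a<0$ is equivalent to "two negative eigenvalues" — I should double-check that $a<0$ together with $|c|^2-ab<a$ already forces $a+b+1<0$, which it does since $|c|^2-ab<a$ gives $|c|^2 < a+ab = a(b+1)$, and dividing the strict inequality $a(b+1)>|c|^2\ge0$ by $a<0$ gives $b+1<0$, hence $a+b+1<0$. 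One negative eigenvalue occurs iff $\det N<0$, i.e. $|c|^2>a(b+1)$, equivalently $|c|^2-ab>a$ — this is \eqref{2.15}; the borderline $\det N = 0$ with one strictly negative eigenvalue is exactly $|c|^2=a(b+1)$ together with $\operatorname{tr} N<0$, i.e. $a+b+1<0$ — this is \eqref{2.16} (in that case $N$ has a zero eigenvalue, but a zero eigenvalue of $N$ does not produce a negative eigenvalue of $\Op_\e$, only possibly a zero-energy resonance, so it does not affect the count). The remaining cases give no negative eigenvalue.

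The main obstacle, and the step deserving the most care, is the first one: justifying that the number of negative eigenvalues of $\Op_\e$ is \emph{independent of $\e>0$} and equals the signature count of $N$. There are two issues. First, the Behrndt--Luger formula as stated counts eigenvalues below the bottom of the essential spectrum relative to a fixed self-adjoint reference; I would verify that applying it with the decoupled Neumann$\oplus$Dirichlet operator as reference is legitimate and that the relevant $Q$-function (Weyl function) of the reference, evaluated on $(-\infty,0)$, is monotone so that the number of negative eigenvalues introduced by the perturbation $\rT$ is governed solely by the value at $0$. Second — and this is where I would be most careful — one must check that no eigenvalue is lost or gained \emph{at} $\l=0$ as $\e$ varies: i.e. that $\l=0$ is never itself an eigenvalue of $\Op_\e$ except possibly on the measure-zero parameter set $\det N=0$, which has already been handled separately. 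Equivalently, I would confirm directly that $\k=0$ is a root of the limiting form of \eqref{4.7} — namely $(a)(1+b)-|c|^2=0$ after noting $\k\tanh\k\e\to 0$ and $\k\coth\k\to 1$ as $\k\to 0^+$ — precisely when $\det N=0$, which dovetails with the boundary cases \eqref{2.16}. If the Behrndt--Luger route proves awkward to set up cleanly, the fallback is an elementary but slightly tedious direct argument: substitute $k=\iu\k$, study the function $\Phi(\k):=(\k\tanh\k\e+a)(\k\coth\k+b)-|c|^2$ for $\k>0$, evaluate $\Phi(0^+)=a(b+1)-|c|^2$ and $\Phi(\k)\to +\infty$ as $\k\to+\infty$ (since $\k\tanh\k\e\to+\infty$ and $\k\coth\k\to+\infty$, provided $a,b$ do not overwhelm — one checks the dominant term is $\k^2\tanh(\k\e)\coth\k \sim \k^2\tanh(\k\e)>0$), and count sign changes using the monotonicity of each factor in $\k$; this recovers the same three cases, with the number of roots in $(0,\infty)$ equal to $2,1,0$ according to the sign of $\Phi(0^+)$ and of its derivative/convexity pattern. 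Either way the answer is $\e$-independent, which is the crucial structural point.
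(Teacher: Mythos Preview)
Your approach is correct and, like the paper, rests on the Behrndt--Luger formula; the difference is the size of the boundary space you work in. The paper applies \cite{BehLug_jpa10} at all four vertex-edge endpoints, obtaining a $4\times 4$ matrix $\rD$ that carries an explicit $\e^{-1}$ block coming from the Dirichlet-to-Neumann map on the short edge; it then reduces to a $3\times 3$ matrix $\rD_\e=\rD_\infty-\e^{-1}\rE_0$ and invokes rank-one eigenvalue interlacing (together with a kernel-inclusion argument to rule out borderline zeros) to show that $\rD_\e$ and $\rD_\infty$ have the same number of positive eigenvalues, thereby eliminating the $\e$-dependence. You instead work with the $2$-dimensional boundary triple at $v_c$, having already absorbed the Neumann and Dirichlet conditions at the outer vertices into the Weyl functions $m_s$ and $m_e$; the payoff is that $m_s(0)=\sqrt{\l}\tan(\sqrt{\l}\,\e)\big|_{\l=0}=0$ for every $\e$, so the matrix $N=\rT-\operatorname{diag}(0,-1)$ is $\e$-independent from the outset and no interlacing step is needed. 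In other words, the ``main obstacle'' you flag is not actually an obstacle in your formulation: your own computation already shows the count is $\e$-independent, and the monotonicity of the Herglotz function $M(\l)$ on $(-\infty,0)$ together with $H_{\text{Dir}}>0$ gives the Birman--Schwinger count directly. The final $2\times 2$ determinant/trace analysis and the handling of the degenerate case $\det N=0$ then coincide with the paper's. Your route is a bit more economical; the paper's has the advantage of quoting \cite[Thm.~1]{BehLug_jpa10} verbatim without having to set up a reduced boundary triple.
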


\begin{proof}
  We introduce the matrices
  \begin{equation*}
    \rA =  \begin{pmatrix}
      0 & 0 & 0 & 0 \\
      0 & -a & -c & 0 \\
      0 & -\bar{c} & -b & 0
      \\
      0 & 0 & 0 & 1
    \end{pmatrix}, \qquad
    \rB =\begin{pmatrix}
      1 & 0 & 0 & 0 \\
      0 & 1 & 0 & 0 \\
      0 & 0 & 1 & 0 \\
      0 & 0 & 0 & 0
    \end{pmatrix}, \qquad \rM:=
    \begin{pmatrix}
      -\e^{-1} & \e^{-1} & 0 & 0
      \\
      \e^{-1} & -\e^{-1} & 0 & 0
      \\
      0 & 0 & -1 & 1
      \\
      0 & 0 & 1 & -1
    \end{pmatrix}.
  \end{equation*}
  Here $A$ and $B$ encode the vertex conditions in our graph (see
  \cite{KosSch_jpa99}), while $\rM$ represents the
  Dirichlet-to-Neumann operator at $\lambda=0$ (see
  \cite[Sec~3.5]{BerKuc_graphs}).  According to
  \cite[Thm.~1]{BehLug_jpa10}, the number of the negative eigenvalues
  of the operator $\Op_\e$ is given by the number of the positive
  eigenvalues of the matrix
  \begin{equation*}
    \rD:= \rA\rB^*+\rB\rM\rB^* = \begin{pmatrix}
      -\e^{-1} & \e^{-1} & 0 & 0 \\
      \e^{-1} & -a-\e^{-1} & -c & 0 \\
      0 & -\bar{c} & -b-1 & 0 \\
      0 & 0 & 0 & 0
    \end{pmatrix}.
  \end{equation*}
  It is obvious that the number of positive eigenvalues of $D$
  coincides with that of
  \begin{equation*}
    \rD_\e:=\rD_\infty-\e^{-1} \rE_0,\qquad \rD_\infty:= \begin{pmatrix}
      0 & 0 & 0  \\
      0 & -a & -c  \\
      0 & -\bar{c} & -b-1
    \end{pmatrix}, \qquad \rE_0:=\begin{pmatrix}
      1 & -1 & 0   \\
      -1 & 1 & 0  \\
      0 & 0 & 0
    \end{pmatrix}.
  \end{equation*}
  
  The main point of the proof is that the matrix $\rD_\e$ has the same
  number of positive eigenvalues as $\rD_\infty$, which will follow
  from the Eigenvalue Interlacing Theorem for rank-one perturbations
  \cite[Cor.\ 4.3.9]{HornJohnson}, namely that
  \begin{equation}
    \label{eq:interlacing}
    \lambda_i(\rD_\infty) \leqslant \lambda_{i+1}(\rD_\e) \leqslant \lambda_{i+1}(\rD_\infty),
    \qquad
    i = 1,2,
  \end{equation}
  where the eigenvalues are numbered in the ascending order counting the
  multiplicities. From this inequality we conclude immediately that
  $\rD_\e$ has \emph{at most} the number of positive eigenvalues of
  $\rD_\infty$.

  Furthermore, the cases of equality in \eqref{eq:interlacing} are
  characterized conveniently as follows
  \cite[Thm.~4.3]{BerKenKurMug_tams19}: if a given value $\lambda$ has
  multiplicities $m_0$ and $m_1$ in the spectra of $\rD_\infty$ and
  $\rD_\e$, then $|m_0-m_1|\leqslant 1$ and the intersection of the
  corresponding eigenspaces has dimension $\min(m_0,m_1)$.

  Consider first the case when $\rD_\infty$ has two positive
  eigenvalues.  This occurs when the non-trivial $2\times2$ submatrix
  of $\rD_\infty$ has both its determinant and trace positive, i.e.\
  $a(b+1)-|c|^2>0$ and $a+b+1 < 0$.  Inequality \eqref{eq:interlacing}
  gives $\lambda_2(\rD_\e) \geq \lambda_1(\rD_\infty)=0$.  Moreover,
  $\lambda_2(\rD_\e)=0$ would mean that
  $\Ker \rD_\infty \subset \Ker \rD_\e$; since the former is the span
  of $(1,0,0)^T$, we can exclude this possibility, obtaining
  $\lambda_3(\rD_\e) \geq \lambda_2(\rD_\e)>0$.  We remark that
  $a(b+1)-|c|^2>0$ implies $a(b+1)>0$ and therefore $a+b+1 < 0$ is
  equivalent to $a<0$.

  Suppose now that $\rD_\infty$ has one positive and one negative
  eigenvalue, which occurs when $a(b+1)-|c|^2 < 0$.  Inequality
  \eqref{eq:interlacing} gives
  $\lambda_3(\rD_\e) \geq \lambda_2(\rD_\infty)=0$ and we can exclude
  the case of equality similarly to above.

  Finally, if $\rD_\infty$ has one positive and two zero eigenvalues,
  i.e.\ if $|c|^2 = a(b+1)$ and $a+b+1<0$, inequality
  \eqref{eq:interlacing} gives
  $\lambda_3(\rD_\e) \geq \lambda_2(\rD_\infty)=\lambda_2(\rD_\e) =
  \lambda_1(\rD_\infty)=0$.  If $\lambda_3(\rD_\e)$ were equal to 0,
  the multiplicity of zero would be at least 2 in the spectrum of
  $\rD_\e$, and we again have $\Ker \rD_\infty \subset \Ker \rD_\e$,
  which is impossible.  The proof is complete.
\end{proof}

It follows immediately from this lemma that the operator $\Op_\e$ can
have negative eigenvalues only under the conditions formulated in
items~\eqref{2eig} and~\eqref{1eig} of
Lemma~\ref{lem:number_eig_rank2}. This means that under these
conditions equation (\ref{4.7}) can have respectively either one or
two positive roots.

\subsection{Case $\rank\rP=0$, two negative eigenvalues of $\Op_\e$}

We first suppose that inequalities (\ref{2.14}) are satisfied
therefore equation (\ref{4.7}) possesses two roots. Setting $\e=0$, this
equation becomes
\begin{equation}\label{4.9}
\Ups_0(\k)=\frac{|c|^2-a(b+1)}{a},\qquad \text{where }\Ups_0(\k):=\k\coth \k-1.
\end{equation}
The function $\Ups_0(\k)$ is monotonically increasing in $\k\in\mathbb{R}$ and vanishes at $\k=0$, while the right hand side in the above equation is positive by our assumptions. Hence, this equation possesses a unique root $\k_0>0$. We also observe that the function $\Ups_0(\k)$ is holomorphic in some fixed ball in the complex plane centered at the point $\k_0$.

We rewrite equation (\ref{4.7})  as
\begin{equation}\label{4.10}
\Ups_0(\k)+ \frac{1}{a}(\Ups_0(\k)+b+1)\k\tanh \k\e=\frac{|c|^2-a(b+1)}{a}
\end{equation}
and the left hand side of this equation is holomorphic in $\k$ in the aforementioned ball centered at $\k_0$ and sufficiently small $\e$. Hence, by the implicit function theorem \cite[Thm.~1.3.5 and Rem.~1.3.6]{Narasimhan_analysis_manifolds} we immediately conclude that this equation possesses a unique root $\k=\k(\e)$, which is holomorphic in $\e$. This root then generates a negative eigenvalue $\l(\e)=-\k^2(\e)$ of type (B).

%

Next, we seek the second root of equation (\ref{4.7}) as $\k=\e^{-\frac{1}{2}}\rho$ and, multiplying (\ref{4.7}) by $\e^{\frac{1}{2}}$, for the new unknown $\rho$ we get the equation
\begin{equation}\label{4.17}
\big(\e^{-\frac{1}{2}}\rho\tanh \rho\e^\frac{1}{2}+a\big) \big(\rho\coth\e^{-\frac{1}{2}}\rho+\e^\frac{1}{2}b\big) - \e^{\frac{1}{2}}|c|^2=0.
\end{equation}
Since we seek positive roots of equation (\ref{4.7}), we do the same for the above equation. It is clear that
\begin{equation}\label{4.18}
\coth t=1 + \Ups_1(t),\qquad \text{where }\Ups_1(t):=\frac{2 e^{-2t}}{1-e^{-2t}},
\end{equation}
and equation (\ref{4.17}) can be represented as
\begin{equation}\label{4.19}
\Ups_2(\rho,\e^\frac{1}{2})=0,
\end{equation}
where
\begin{align*}
&\Ups_2(\rho,\tht):=\Ups_3(\rho,\tht) +\big(\tht^{-1}\rho\tanh \rho\tht^{-1}+a\big) \rho\Ups_1(\tht^{-1}\rho),
\\
&\Ups_3(\rho,\tht):=\big(\tht^{-1}\rho\tanh \rho\tht+a\big) \big(\rho+\tht b\big) - \tht |c|^2.
\end{align*}
The function $(\rho,\tht)\mapsto \tht^{-1}\rho\tanh \rho\tht$ is obviously holomorphic in $(\rho,\tht)$ as a function of two complex variables on the domain $\Om:=\{(\rho,\tht):\ \RE\rho>-\d,\ |\IM\rho|<\d,\ |\tht|<\d\}$ for some fixed small $\d$. For small $\tht$ the leading term  of its Taylor series is
\begin{equation*}
\tht^{-1}\rho\tanh \rho\tht=\rho^2+O(\tht^2).
\end{equation*}
Hence, the function $\Ups_3(\rho,\tht)$ is also holomorphic in $(\rho,\tht)\in\Om$. Since $\Ups_3(\rho,0)=\rho (\rho^2+a)$, the function $\Ups_3(\rho,0)$ possesses the only positive root $(-a)^\frac{1}{2}$ and by implicit function theorem \cite[Thm.~1.3.5 and Rem.~1.3.6]{Narasimhan_analysis_manifolds} we conclude that the equation $\Ups_3(\rho,\tht)=0$ possesses the only positive root $\rho_0=\rho_0(\tht)$, which is holomorphic in $\tht$ and
\begin{equation}\label{4.20}
\rho_0(\tht)=(-a)^\frac{1}{2} +O(\tht).
\end{equation}
Calculating the derivative
$\frac{\p \Ups_3}{\p \rho}(\rho,\tht)$, we see that
\begin{equation}\label{4.21}
\frac{\p \Ups_3}{\p \rho}(\rho,\tht)
\geqslant \frac{-a}{4}>0 \qquad\text{for}\qquad \frac{2(-a)^\frac{1}{2}}{3}\leqslant \rho\leqslant  \frac{4(-a)^\frac{1}{2}}{3}
\end{equation}
provided $\tht$ is real and small enough. It also follows from the definition of the function $\Ups_1(t)$ that
\begin{equation}\label{4.22}
0< \big|\tht^{-1}\rho\tanh \rho\tht+a\big|
\rho\Ups_1(\tht^{-1}\rho)< 3|a|^{\frac{3}{2}}e^{-2\tht^{-1}(-a)^\frac{1}{2}}
\quad\text{for}\quad \frac{2(-a)^\frac{1}{2}}{3}\leqslant \rho\leqslant  \frac{4(-a)^\frac{1}{2}}{3}
\end{equation}
if $\tht$ is real and small enough.
Using this estimate and (\ref{4.21}), by straightforward calculations for $\rho_\pm(\tht):=\rho_0(\tht)\pm 16 (-a)^\frac{1}{2}e^{-2\tht^{-1}(-a)^\frac{1}{2}}$ with small real $\tht$ we find:
\begin{equation}\label{4.29}
\Ups_2(\rho_+(\tht),\tht)  \geqslant 4 |a|^\frac{3}{2}e^{-2\tht^{-1}(-a)^\frac{1}{2}} - 3|a|^\frac{3}{2}e^{-2\tht^{-1}(-a)^\frac{1}{2}}>0,\qquad
\Ups_2(\rho_-(\tht),\tht)<0.
\end{equation}
Hence, equation (\ref{4.19}) possesses a root in the interval $\big(\rho_-(\tht),\rho_+(\tht)\big)$. Returning back to equation (\ref{4.7}), we then conclude that its second root reads as
\begin{equation}\label{4.23}
\k(\e)=\e^{-\frac{1}{2}} \big(\rho_0(\e^\frac{1}{2}) +e^{-2\e^{-\frac{1}{2}}(-a)^\frac{1}{2}}
\rho_1(\e)\big)
= (-a)^{\frac{1}{2}}\e^{-\frac{1}{2}} + O(1),
\end{equation}
where $\rho_1$ is some function with $|\rho_1(\e)|\leqslant 16$. This root produces a negative eigenvalue $\l(\e)=-\k^2(\e)$ of type (S).



\subsection{Case $\rank\rP=0$, one negative eigenvalue of $\Op_\e$}

Suppose now that either inequality (\ref{2.15}) or conditions
(\ref{2.16}) are satisfied and therefore the operator $\Op_\e$
possesses one negative eigenvalue.  Here we consider several cases.

\subsubsection{Assume that $a>0$}
Then we rewrite equation (\ref{4.7}) to (\ref{4.10}) and as above, we see that it possesses a root $\k(\e)$ holomorphic in $\e$, which produces an eigenvalue of type (B).

\subsubsection{Assume that $a<0$ and $|c|^2-a(b+1)\geqslant 0$} Then we seek the root of equation (\ref{4.7}) as $\k=\e^{-\frac{1}{2}}\rho$ and for $\rho$ we obtain equation (\ref{4.17}). As above, this equation possesses the root $\k(\e)$ obeying (\ref{4.23}) with a holomorphic function $\rho_0$ and identity (\ref{4.20}) is satisfied. This root produces an eigenvalue of type (S).

\subsubsection{Assume that $a=0$, $c\ne 0$ and, consequently, $b+1>0$}
We seek the root of  (\ref{4.7}) as $\k=\e^{-\frac{1}{3}}\tau$ and for $\tau$ we obtain the equation
\begin{equation}\label{4.25}
\e^{-\frac{2}{3}}\tau\tanh \tau\e^\frac{2}{3} (\tau\coth\e^{-\frac{1}{3}}\tau+\e^\frac{1}{3}b) - |c|^2=0.
\end{equation}
This equation can be analyzed in the same way as was done for equation (\ref{4.17}). Namely, we rewrite it as
\begin{equation}\label{4.26}
\Ups_4(\tau,\e^\frac{1}{3})=0,
\end{equation}
where
\begin{align*}
&\Ups_4(\tau,\nu):=
\Ups_5(\tau,\nu) + \nu^{-2}\tau^2\tanh \tau\nu^2 \Ups_1(\nu^{-1}\tau),
\\
&\Ups_5(\tau,\nu):=(\tau+\nu b) \nu^{-2}\tau\tanh \tau \nu^2 - |c|^2.\nonumber
\end{align*}
The function $\Ups_5$ is obviously holomorphic in $(\tau,\nu)\in\Om$ and $\Ups_5(\tau,0)=\tau^3-|c|^2$. The latter function possesses the only positive root $|c|^\frac{2}{3}$ and by the implicit function theorem we again conclude that the function $\Ups_5(\tau,\nu)$ has the only positive root $\tau_0(\nu)$, which is holomorphic in $\nu$ and
\begin{equation}\label{4.27}
\tau_0(\nu)=|c|^\frac{2}{3}-\frac{b\nu}{3} + O(\nu^2),\qquad \nu\to0.
\end{equation}
We also have estimates similar to (\ref{4.21}), (\ref{4.22}):
\begin{equation}\label{4.28}
\frac{\p \Ups_5}{\p \tau}(\tau,\nu)
\geqslant \frac{|c|^\frac{4}{3}}{2}>0,
\qquad |\nu^{-2}\tau\tanh \tau\nu^2 \Ups_1(\nu^{-1}\tau)|\leqslant 4|c|^2 e^{-2\nu^{-1}|c|^\frac{2}{3}}
\end{equation}
as $\nu$ is small enough and
\begin{equation*}
\frac{2^\frac{1}{2}}{3^\frac{1}{2}}|c|^\frac{2}{3}
\leqslant \tau\leqslant  \frac{2}{3^\frac{1}{2}}|c|^\frac{2}{3}.
\end{equation*}
As in (\ref{4.29}), we also confirm that
\begin{equation}\label{4.31}
\pm \Ups_4(\tau_\pm(\nu),\nu) >0,\qquad \tau_\pm(\nu):=\tau_0(\nu) \pm 9|c|^\frac{2}{3} e^{-2\nu^{-1}|c|^\frac{2}{3}}
\end{equation}
and hence, equation (\ref{4.26}) possesses a root on the interval $\big(\tau_-(\e^{\frac{1}{3}}), \tau_+(\e^{\frac{1}{3}})\big)$. Returning back to equation (\ref{4.7}), we conclude that its root can be represented as
\begin{equation}\label{4.32}
\k(\e)=\e^{-\frac{1}{3}} \big(\tau_0(\e^\frac{1}{3}) +e^{-2\e^{-\frac{1}{3}}|c|^\frac{2}{3}}
\tau_1(\e)\big)
= |c|^{\frac{2}{3}}\e^{-\frac{1}{3}}+O(1),
\end{equation}
where $\tau_1$ is some function with $|\tau_1(\e)|\leqslant 9$. This root produces an eigenvalue $\l(\e)=\k^2(\e)$ of type (C).

\subsubsection{Assume that $|c|^2-a(b+1)=0$ and $a=0$}
Then $c=0$ and $b+1<0$ and equation (\ref{4.7}) becomes (\ref{4.9}) and it possesses a unique fixed positive root $\k_0$, which is obviously of type (B).

\subsection{Case $\rank\rP=0$, eigenfunction localization}
On each edge, we now seek the norm of the eigenfunctions associated with the eigenvalues $-\k_B^2(\e)$ of type (B), $-\k_S^2(\e)$ of type (S), and $-\k_C^2(\e)$ of type (C). From the final equation of (\ref{2.2}), we see that the eigenfunction $\psi$ represented by (\ref{4.37}) must satisfy
\begin{equation}\label{4.39}
\psi_s'(\e)+a\psi_s(\e)+c\psi_e(1)=0,
\end{equation}
and the other vertex condition is again automatically satisfied because $\psi$ is an eigenfunction. This leads to
\begin{align}
\cC_{s}=-\b_0(\k,\e)c\sinh\k,\qquad \cC_e=\b_0(\k,\e)(\k\sinh\k\e+a\cosh\k\e),
\end{align}
where $\beta_0$ is determined by normalization. We substitute this into (\ref{4.33}), choose appropriate $\b_0$, and divide the numerator and denominator by $\sinh^2\k$ to find that
\begin{equation}\label{4.40}
\|\psi_s\|^2=\frac{(\sinh2\k\e+2\k\e)|c|^2}{(\sinh2\k\e+2\k\e)|c|^2+\Phi(\k)(\k\sinh\k\e+a\cosh\k\e)^2},
\end{equation}
where
\begin{equation*}
\Phi(\k)=2\coth\k-\frac{2\k}{\sinh^2\k},
\end{equation*}
and $\|\psi_e\|^2$ is found from (\ref{4.41}).

With $\k_B(\e)=\k_0+O(\e)$, we immediately obtain (\ref{2.23}) for eigenvalues of type (B). For $\k_S$ and $\k_C$, we have $\k$ tending to $\infty$ for small $\e$, so we first notice that in these cases $\Phi(\k)=2+O(e^{-\k})$. Then we apply (\ref{4.23}) to obtain (\ref{2.24}) for eigenvalues of type (S). Recall that eigenvalues of type (C) occur only for $a=0$, so in this case we apply (\ref{4.32}) to obtain (\ref{2.25}).

\section*{Acknowledgments}

The authors thank the anonymous referee for numerous improving suggestions.

The research by D.I.~Borisov was supported by Russian Science Foundation, grant no. 23-11-00009, {https://rscf.ru/project/23-11-00009/}.

The authors have no competing interests to declare that are relevant to the content of this article.

\bibliographystyle{abbrv}
\bibliography{bk_bibl}

\end{document}